\newcommand{\inlineitem}[1][]{%
\ifnum\enit@type=\tw@
    {\descriptionlabel{#1}}
  \hspace{\labelsep}%
\else
  \ifnum\enit@type=\z@
       \refstepcounter{\@listctr}\fi
    \quad\@itemlabel\hspace{\labelsep}%
\fi}
\newcommand{\IR}{\ensuremath{\mathbb{R}}}
\newcommand{\IN}{\ensuremath{\mathbb{N}}}
\newcommand{\norm}[1]{\left\Vert#1\right\Vert}
\newcommand{\set}[1]{\left\{#1\right\}}
\newcommand{\abs}[1]{\left|#1\right|}
\newcommand{\brackets}[1]{\left(#1\right)}
\renewcommand{\d}{{\rm d}} 
\renewcommand{\rho}{\varrho}
\definecolor{darkred}{RGB}{139,0,0}
\definecolor{darkgreen}{RGB}{0,100,0}
\definecolor{darkmagenta}{RGB}{139,0,139}
\definecolor{darkpurple}{RGB}{110,0,180}
\definecolor{darkblue}{RGB}{40,0,200}
\definecolor{darkorange}{RGB}{255,140,0}
\DeclareMathOperator{\cost}{cost}
\DeclareMathOperator{\er}{e}
\newtheorem{thm}{Theorem}
\theoremstyle{plain}
\newtheorem{lem}{Lemma}
\theoremstyle{definition}
\newtheorem{rem}{Remark}
\newtheorem{alg}{Algorithm}
\title{
Recovery algorithms for high-dimensional rank one tensors
} 
\author{David Krieg\\
Mathematisches Institut, Universit\"at Jena\\
Ernst-Abbe-Platz 2, 07743 Jena, Germany\\
email: 
david.krieg@uni-jena.de\\[2ex]
Daniel Rudolf\\
Institut f\"ur Mathematische Stochastik, Universit\"at G\"ottingen\\
Goldschmidtstr. 3-5, 37077 G\"ottingen, Germany\\
email: 
daniel.rudolf@uni-goettingen.de}
\date{\today}
\begin{document}

\maketitle

\begin{abstract}
\noindent
We present 
deterministic algorithms for the uniform recovery
of $d$-variate 
rank one tensors from function values.
These tensors are given as product of $d$ univariate functions
whose $r$th weak derivative is bounded by $M$. 
The recovery problem is known to suffer from the curse
of dimensionality for $M\geq  2^r r!$.
For smaller $M$, a randomized algorithm is known
which breaks the curse. 
We 
construct
a deterministic algorithm which is even less costly.
In fact, 
we completely characterize
the tractability of this problem 
by distinguishing
three different ranges of the parameter $M$.
\end{abstract}

\noindent
{\bf Keywords: } High dimensional approximation, rank one tensors, 
worst case error, tractability, curse of dimensionality, dispersion.

\noindent
{\bf Classification:} Primary: 65Y20; Secondary: 41A15, 41A25, 41A63, 65D99. 


\section{Introduction}  \label{sec1}

Suppose we know that a $d$-variate function $f$
is the product of $d$ univariate functions
with a certain smoothness.
How many function values do we need to capture $f$
up to some error $\varepsilon\in (0,1)$ in the uniform norm?
This question has been
posed and investigated 
in the work of Bachmayr, Dahmen, DeVore and Grasedyck \cite{BDDG}.
The hope is that the structural knowledge about $f$
allows for 
efficient deterministic approximation schemes
in high-dimensional settings. 
More precisely, 
it is assumed that $f$ is contained in 
the class of rank one tensors given by
\begin{equation*} 
F_{r,M}^d = \big\{ \bigotimes_{i=1}^d  f_i \mid
f_i:[0,1]\to [-1,1], \
\Vert f_i^{(r)} \Vert_\infty \le M \big\}
\end{equation*} 
for smoothness parameters $r\in \mathbb{N}$ and $M>0$.
Here, $f_i^{(r)}$ denotes the $r$th weak derivative of $f_i$.
In particular, 
it is assumed that $f_i$ is contained in
the class $W_\infty^r([0,1])$ of univariate functions
which have $r$ weak derivatives in $L_\infty([0,1])$.

It is proven in \cite{NoRu16} that for $M\geq2^r r!$ this problem
suffers from the curse of dimensionality:
To ensure an error smaller than $\varepsilon$,
any deterministic algorithm must use exponentially many function values
with respect to the dimension.
Even for randomized methods, the curse is present.
For $M<2^r r!$ however,
a randomized algorithm is constructed
which does not require exponentially many function values.
We are driven by the question whether 
the same is possible with a deterministic algorithm.
We give an affirmative answer to this question.
In fact, we explicitly construct and analyze deterministic algorithms
for different ranges of the smoothness parameters. 
We use the following terminology.

The worst case error of an algorithm $A$ on the class $F_{r,M}^d$ is given by
\[
 \er(A):=\sup_{f\in F_{r,M}^d} \norm{f-A(f)}_\infty.
\]
The number of function values used by $A$ for the input $f$ 
is denoted by $\cost(A,f)$. The worst case cost of $A$ is given by
\[
 \cost(A) := \sup_{f\in F_{r,M}^d} \cost(A,f).
\]

A deterministic algorithm is already constructed in \cite{BDDG}.
It achieves the worst case error $\varepsilon$
while using at most
\begin{equation*}
 C_{r,d}\, M^{d/r} \varepsilon^{-1/r}
\end{equation*}
function values of $f$, see \cite[Theorem~5.1]{BDDG}.
This number behaves optimally as a function of $\varepsilon$.
However, the constant $C_{r,d}$ and hence
the number of function values grows super-exponentially with $d$
for any $M>0$ and $r\in\IN$.
For the algorithm, the following observation of 
Bachmayr, Dahmen, DeVore and Grasedyck is crucial.
If we know some $z^*\in [0,1]^d$
with $f(z^*)\not =0$,  
we can
construct a
method
$I_{m}(z^*,\cdot)$
that uses $m$ function values and satisfies
\begin{equation} \label{eq: known_z_star}
 \norm{I_{m}(z^*,f)-f}_\infty \leq \varepsilon,
\end{equation}
if we choose
\begin{equation} \label{eq: n_chosen_for_err_eps}
   m= \left\lfloor C_{r,M}\, d^{1+1/r} \varepsilon^{-1/r} \right\rfloor.
\end{equation}
Here, $C_{r,M}$ is a positive constant which only 
depends on $r$ and $M$. For example, 
one can choose $C_{r,M}=4\max\{1,C_1(r)M\}^{1/r}$
with $C_1(r)$ as in \cite[Section 2]{BDDG}.
Roughly speaking, the knowledge of a non-zero of $f$
allows us to reduce the problem
to $d$ univariate approximation problems
which can, for example, be treated by the use of polynomial interpolation.
With this observation at hand, the authors of \cite{BDDG}
use an approximation scheme of the following type:
\begin{alg}
\label{generic algorithm}
   Given $m\in \mathbb{N}$, a finite point set $P\subset[0,1]^d$ 
   and a function $f\in F_{r,M}^d$,
   obtain $A_{P,m}(f)$ as follows:
\begin{enumerate}
 \item 
 For any $x\in P$ check whether $f(x)\not = 0$.
 \item If we found some $z^*\in P$ with $f(z^*)\not =0$ then
       call $I_{m}(z^*,f)$ from \eqref{eq: known_z_star}. 
       If $f_{\mid P}=0$,
       then return the zero function.
\end{enumerate}
\end{alg}
The idea behind this algorithm is to choose $P$ such that
whenever $f_{\mid P} = 0$, then $\Vert f\Vert_\infty$
must be small and the zero function is
a good approximation of $f$.
The authors of \cite{BDDG} use a point set $P$, which contains
a finite Halton sequence $H$.
They obtain that $f_{\mid P} = 0$ implies
\[
 \norm{f}_\infty \leq 
 (2M)^d \left(2^d  \pi_d\right)^r \abs{H}^{-r},
\]
where $\pi_d$ is the product of the first $d$ primes.
To ensure an error bound smaller than $\varepsilon$ one needs 
\begin{equation}  \label{eq: card_P_bei_BDDG}
 \abs{P} \geq \abs{H} \geq 
 (2^d M^d \varepsilon^{-1})^{1/r} \left(2^d \pi_d \right)
\end{equation}
function evaluations of $f$.
However, this number increases super-exponentially with the dimension
for all parameters $M$ and $r$.

We also use Algorithm~\ref{generic algorithm}
but for smaller point sets $P$.
To give a better intuition
of the role of the point set, we 
introduce the notion of detectors.
We call a finite point set $P$ in $[0,1]^d$ 
an $\varepsilon$-detector for the class $F_{r,M}^d$
if it contains (detects) a non-zero of every function $f\in F_{r,M}^d$
with uniform norm greater than $\varepsilon$.
If $P$ is an $\varepsilon$-detector for $F_{r,M}^d$
and $m$ is chosen as in (\ref{eq: n_chosen_for_err_eps}),
it is easy to see 
that Algorithm~\ref{generic algorithm} satisfies
\begin{equation*}
 \er(A_{P,m}) \leq \varepsilon \quad\text{and}\quad
 \cost(A_{P,m}) \leq \abs{P} + m,
\end{equation*}
see Lemma~\ref{detector lemma}.
We thus need to construct small $\varepsilon$-detectors $P$ for $F_{r,M}^d$.

In the range $M\geq 2^r r!$
we know that the problem suffers from the curse of dimensionality
such that we cannot expect 
to find an $\varepsilon$-detector with small cardinality. 
In that case, we provide a detector 
for which the cardinality of the point set depends 
exponentially on $d$.
In the range $M<2^r r!$
we give a detector whose cardinality only grows polynomially with $d$.
The order of growth is proportional to $\log_2(\varepsilon^{-1})$.
For $M\leq r!$
the point set can be chosen even smaller.
There is a detector whose cardinality grows quadratically 
with $d$, at worst,
regardless of the value of $\varepsilon$.
Altogether, we obtain the following:
\begin{thm}
\label{thm: thm_upp_bnd}
 For any $r\in\mathbb{N}$ and $M>0$, there are positive constants
 $c_i$, $i\leq 4$, such that the following holds.
 For any $d\in \mathbb{N}$ and $\varepsilon\in (0,1)$,  
 there is a finite point set $P\subset[0,1]^d$
 and a natural number $m$ such that $\er(A_{P,m})\leq \varepsilon$ and
 \begin{equation*}
  \cost(A_{P,m}) \leq
  \left\{\begin{array}{lr}
        c_1^d\, \varepsilon^{-1/r}
        \quad
        & \text{if } M\in(0,\infty),\\ 
        c_2 \exp\left(c_3(1+\ln(\varepsilon^{-1}))(1+\ln d)\right)\quad
        & \text{if } M\in(0,2^r r!),\\
        c_4\, d^{2} \varepsilon^{-1/r} \ln(\varepsilon^{-1/r})
        & \text{if } M\in(0,r!].
        \end{array}\right.
 \end{equation*}
\end{thm}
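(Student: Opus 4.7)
The plan is to reduce the statement to the construction of three $\varepsilon$-detectors for $F_{r,M}^d$, one for each range of $M$, and then to invoke Algorithm~\ref{generic algorithm} with $m$ as in~\eqref{eq: n_chosen_for_err_eps}. By Lemma~\ref{detector lemma}, if $P$ is an $\varepsilon$-detector and $m$ is chosen in this way, then $\er(A_{P,m}) \leq \varepsilon$ and $\cost(A_{P,m}) \leq \abs{P} + m$. Since $m \leq C_{r,M}\, d^{1+1/r}\varepsilon^{-1/r}$ is at most polynomial in both $d$ and $\varepsilon^{-1/r}$, the contribution of $m$ is absorbed into each of the three target bounds after adjusting the constants $c_i$, and the remaining task is to build, in each regime, a detector of the required cardinality.

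To construct detectors I would first prove a smoothness lemma of the following shape. Pick any $f = \bigotimes_i f_i \in F_{r,M}^d$ with $\norm{f}_\infty > \varepsilon$ and any near-maximizer $x^* \in [0,1]^d$. Then $f$ does not vanish on an axis-parallel box $B \ni x^*$ whose $i$-th edge length $h_i$ is controlled from below in terms of $\abs{f_i(x^*_i)}$ by a quantitative ``zero-isolation'' estimate derived from $\Vert f_i^{(r)}\Vert_\infty \leq M$. Combined with $\prod_i \abs{f_i(x^*_i)} > \varepsilon$, this gives a lower bound on the volume (or, more precisely, on the admissible shape) of the unavoidable box $B$, and so the construction of an $\varepsilon$-detector reduces to the geometric task of hitting every such box with a small point set, i.e.\ to a dispersion-type condition.

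With this reduction in place I would pick a different point set in each regime. For $M\in(0,\infty)$, a uniform tensor-product grid of side $n$ of order $c^d\varepsilon^{-1/r}$ (with $c=c(r,M)$) has dispersion of order $1/n$ and cardinality $n^d$; absorbing $c^{d^2}$ into a new constant $c_1^d$ then yields the first bound. For $M<2^r r!$, I would combine a sharper version of the zero-isolation lemma (which should remove the $c^d$ factor from the target volume) with low-dispersion sets whose cardinality is polynomial in $d$ and polylogarithmic in the inverse dispersion; inverting the relation should produce the quasi-polynomial form $\exp\bigl(c_3(1+\ln\varepsilon^{-1})(1+\ln d)\bigr)$. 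For $M\leq r!$, I expect the zero-isolation lemma to sharpen further so that the target box is essentially dimension-free in shape, enabling a non-product construction (for instance a union of univariate grids along each coordinate together with a small set of auxiliary points) whose cardinality grows only like $d^2\,\varepsilon^{-1/r}\ln(\varepsilon^{-1/r})$.

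I expect the main obstacle to be the zero-isolation lemma itself, specifically getting the sharp scaling $h_i \geq c\abs{f_i(x^*_i)}^{1/r}$ in the refined regimes. The naive route via Landau--Kolmogorov inequalities on the intermediate derivatives $f_i^{(k)}$ only yields the weaker linear estimate $h_i \geq c\abs{f_i(x^*_i)}$, which suffices for the crude first bound but wastes the available smoothness in the other two; to obtain the correct exponent $1/r$ one must exploit the specific comparison between $M$ and $r!$ (respectively $2^r r!$), presumably by combining the Taylor remainder with the global bound $\Vert f_i\Vert_\infty\leq 1$ and analysing the critical bump alternative in detail, which is exactly where the threshold $2^r r!$ arises in the lower bound of \cite{NoRu16}. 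A secondary difficulty is the $M\leq r!$ case: a pure dispersion-based bound unavoidably pays an exponential price in $d$ as soon as the permitted boxes become very thin, so achieving $d^2$ cardinality requires using the tensor-product structure of $F_{r,M}^d$ in a way that goes beyond merely hitting boxes.
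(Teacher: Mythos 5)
Your high-level plan (reduce to $\varepsilon$-detectors via Lemma~\ref{detector lemma}, then construct a detector in each regime) matches the paper, as does the zero-isolation intuition. However, in each of the three regimes the concrete construction you sketch either fails or omits the key idea.

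\textbf{The tensor-product grid does not work for the first regime.} You propose a uniform grid of side $n\sim c^d\varepsilon^{-1/r}$; this has cardinality $n^d\sim c^{d^2}\varepsilon^{-d/r}$. That is not of the form $c_1^d\,\varepsilon^{-1/r}$: the factor $c^{d^2}$ cannot be absorbed into $c_1^d$, and the $\varepsilon$-exponent is $-d/r$, not $-1/r$. The point is that a grid of one-dimensional spacing $h$ has dispersion of order $h$ (a slab of thickness $<h$ escapes it), so achieving dispersion $\rho^{-d}\varepsilon^{1/r}$ by a grid is catastrophically expensive. The paper instead uses a low-dispersion net: by Larcher's bound (quoted via \cite{ahr}), one can hit every box of volume $\eta$ with $O(2^{7d}/\eta)$ points, giving $|P|=O(2^{7d}\rho^d\varepsilon^{-1/r})$, which is indeed $c_1^d\varepsilon^{-1/r}$. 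This distinction between grids and low-dispersion sets is essential throughout, and your writeup does not use it.

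\textbf{The second regime is not driven by a ``sharper zero-isolation lemma.''} The quasi-polynomial bound arises from a combinatorial splitting argument, which your proposal does not identify. The key fact is that if $M<2^r r!$, any factor $f_i$ with $r$ or more zeros in a fixed interval $I_\delta$ around $1/2$ has $\|f_i\|_\infty\leq C_\delta<1$ (Lemma~\ref{zeros at 1/2}); hence at most $d_0=O(\ln\varepsilon^{-1})$ coordinates can be ``bad'' (Lemma~\ref{harmless functions}). The detector then enumerates all $\binom{d}{d_0}$ candidate bad sets $J$, uses a low-dispersion set in the $d_0$ bad coordinates and a diagonal set of $(r-1)(d-d_0)+1$ points in the remaining coordinates. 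The quasi-polynomial form comes exactly from $\binom{d}{d_0}$ with $d_0\sim\ln\varepsilon^{-1}$. Simply improving the per-factor box lower bound cannot produce a quasi-polynomial detector, because hitting a general box of volume $\eta$ with a $d$-independent number of points is impossible.

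\textbf{The diagonal lemma is missing, and your third-regime construction would not detect.} Both the second and third regimes rely on the observation (Lemma~\ref{diagonal lemma}) that if each $f_i$ ($i\in J$, $|J|=\ell$) has at most $k$ zeros on an interval $I_i$, then any $(\ell k+1)$-element set in $I_J$ with pairwise distinct coordinates contains a non-zero of $f_J$. For $M\leq r!$, Lemma~\ref{almost empty interval lemma} gives each $f_i$ an interval of length exactly $\|f_i\|_\infty^{1/r}$ (no loss factor $\rho$) with at most $r-1$ zeros, so the resulting box has volume $>\varepsilon^{1/r}$ with no exponential penalty. The paper then takes a low-dispersion set $P_0$ of cardinality $O(d\,\varepsilon^{-1/r}\ln\varepsilon^{-1/r})$ in dimension $d$ and augments it by $(r-1)d+1$ small shifts along the main diagonal; the scaling by $1-\gamma$ ensures every shifted point stays in the box, and the diagonal lemma finishes. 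A ``union of univariate grids along each coordinate'' has very large dispersion in $[0,1]^d$ and would not find a non-zero.

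Finally, your worry that Landau--Kolmogorov only yields a linear estimate is moot: the paper's Lemma~\ref{interpolation lemma} (a divided-difference form of the interpolation remainder valid for $W_\infty^r$) directly yields the $\|g\|_\infty^{1/r}$-scaling in Lemmas~\ref{empty interval lemma} and \ref{almost empty interval lemma}; no intermediate-derivative inequalities are needed.
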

We always choose $m$ as in \eqref{eq: n_chosen_for_err_eps}.
The point sets $P$ and the constants $c_i$ can be found in Section~\ref{algorithms section}.
In each of these ranges
we also give a lower bound on the complexity of the problem,
which is the reason for
us to call the resulting algorithms optimal.
In particular, we obtain the following tractability results.
We use standard notions of tractability, 
see Section~\ref{lower bounds section} for their definition.

\begin{thm}
\label{tractability results}
 The problem of the uniform approximation
 of functions in $F_{r,M}^d$ with deterministic algorithms
 based on function values suffers from the curse of dimensionality,
 if and only if $M\in[2^r r!,\infty)$.
 If $M\in(r!,2^r r!)$, it is quasi-polynomially tractable
 but not polynomially tractable.
 If $M\in(0,r!]$, it is polynomially tractable
 but not strongly polynomially tractable.
\end{thm}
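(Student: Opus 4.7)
The plan is to match the upper bounds already delivered by Theorem~\ref{thm: thm_upp_bnd} with the defining inequalities of the three tractability notions, and then to supply lower bounds showing each classification is sharp. The upper bound direction is essentially bookkeeping; the lower bounds are where the real work lies.

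For the upper bound half I would argue as follows. In the range $M\in(0,r!]$, the third estimate $c_4\, d^2\varepsilon^{-1/r}\ln(\varepsilon^{-1/r})$ is of the form $Cd^p\varepsilon^{-q}$ after absorbing the logarithm into $\varepsilon^{-q'}$ for any $q'>1/r$, hence \emph{polynomial tractability}. In the range $M\in(0,2^r r!)$ the second estimate is exactly $c_2\exp(c_3(1+\ln d)(1+\ln\varepsilon^{-1}))$, which is the defining bound for \emph{quasi-polynomial tractability}. The first estimate $c_1^d\varepsilon^{-1/r}$ gives a constructive algorithm valid for all $M\in(0,\infty)$; together with the known lower bound of \cite{NoRu16}, it pinpoints the regime $M\in[2^r r!,\infty)$ as precisely the curse-of-dimensionality range.

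The lower bound half has three ingredients. The curse of dimensionality for $M\in[2^r r!,\infty)$ is already proved in \cite{NoRu16}, so I would cite it. The two remaining statements -- failure of polynomial tractability for $M\in(r!,2^r r!)$ and failure of strong polynomial tractability for $M\in(0,r!]$ -- I would establish by a standard adversary construction. Given any deterministic algorithm using $n$ samples, construct $f_0\equiv 0$ and a product $f_1=\bigotimes_{i=1}^d g_i$ with $\Vert g_i^{(r)}\Vert_\infty\leq M$ and $\norm{f_1}_\infty\geq 2\varepsilon$ such that $f_1$ vanishes on the algorithm's sampling set; since the algorithm cannot distinguish $f_0$ from $f_1$, its worst-case error exceeds $\varepsilon$. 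Producing $f_1$ is a \emph{dispersion} problem: one shows that the sampling set must fail to hit some axis-aligned box on which a product bump of controlled height and univariate $r$-th derivative lives, and then one lower-bounds the number of samples needed to cover all such boxes.

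The main obstacle is calibrating the univariate bump to the exact thresholds $r!$ and $2^r r!$. A Taylor-type argument shows that a univariate function of $L_\infty$-norm at most one supported in an interval of length $h$ has $\norm{g^{(r)}}_\infty$ of order $r!\,h^{-r}$ on interior subintervals and as large as $2^r r!\,h^{-r}$ on subintervals touching the boundary of $[0,1]$. This is exactly why $r!$ separates the strongly polynomial from the polynomial regime and why $2^r r!$ separates tractability from the curse. Once this univariate trade-off between bump height, support width and the bound $M$ is pinned down, dispersion-based cardinality bounds on any $\varepsilon$-detector translate into the required $d$-growth -- super-polynomial in $d$ for $M>r!$, and unbounded in $d$ for every $M>0$ -- and close the theorem.
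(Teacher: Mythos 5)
Your high-level outline of the upper-bound half is exactly the paper's: read off the three cost bounds from Theorem~\ref{thm: thm_upp_bnd} and match them to the defining inequalities of polynomial, quasi-polynomial, and no tractability; cite \cite{NoRu16} for the curse. Your identification of the endpoint $x_0=(r!/M)^{1/r}$ (which sits in $(1/2,1)$ precisely when $M\in(r!,2^r r!)$) as the quantity separating the regimes is also correct and is what drives the paper's choice of the bump functions $g$ and $h$.

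The gap is in the two remaining lower bounds, which you cast uniformly as a \emph{dispersion} problem. That framework cannot give the growth rates you need. The number of points required to achieve dispersion $\eta$ in $[0,1]^d$ grows only \emph{logarithmically} in $d$ (this is exactly what the upper bounds of Sosnovec and Ullrich--Vyb\'iral, cited in the paper, show), so ``dispersion-based cardinality bounds'' can deliver at most $n\gtrsim \log d$, never the super-polynomial growth required for $M\in(r!,2^r r!)$. The paper's actual argument there is combinatorial, not metric: with $g$ supported on $[0,x_0]$ of sup-norm $1$ and $h$ supported on $[x_0,1]$ of sup-norm $|h(1)|<1$, one builds $\binom{d}{k}$ products over all choices of $k$ coordinates getting the sub-unit factor, where $k\approx\ln(\varepsilon^{-1})/\ln(|h(1)|^{-1})$ keeps $\|f\|_\infty>\varepsilon$. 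These $\binom{d}{k}$ functions have pairwise disjoint supports, so a single sample kills at most one of them, and $n<\binom{d}{k}\geq(d/k)^k$ forces a fooling function; letting $\varepsilon\to 0$ sends $k\to\infty$ and rules out any fixed polynomial $d^q$. No empty-box argument produces this. Similarly, for $M\in(0,r!]$ and $r\geq 2$ the paper does not use dispersion at all: it assigns to each of $n\leq d$ samples a \emph{linear} factor vanishing at that sample's $i$-th coordinate (linear is fine since $r\geq 2$ makes $g^{(r)}\equiv 0$), giving $n(\varepsilon,d)>d$ outright. Only in the residual case $r=1$, where the linear trick violates the derivative bound, does the paper invoke the empty-box lemma of \cite{ahr} to find a missed half-box, and that yields merely $n>\lfloor\log_2 d\rfloor$ -- enough to kill strong polynomial tractability but far short of what the moderate-$M$ regime requires. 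You would need to replace the dispersion heuristic with the disjoint-supports counting argument to close the $M\in(r!,2^r r!)$ case.
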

Note that the first two statements are also true for
randomized algorithms.

Before we proceed to the proofs,
let us introduce some further notation that is used in the paper.
In the following, the term \emph{box} 
refers to a product of $d$ nonempty subintervals $I_j$ of $[0,1]$, in formulas it takes the form $\prod_{j=1}^d I_j$.
The \emph{dispersion} of a finite subset $P$ of $[0,1]^d$ is the minimal
number $\eta>0$ such that 
$P$ has non-empty intersection with
every box of volume greater than $\eta$.
For any $k\in\IN$, the set of natural numbers up to $k$ 
is denoted by $[k]$.
If $x_i$ is a real number for each $i$ in some finite set $J$,
we set $x_J=(x_i)_{i\in J}$.
We write $x_J=\mathbf{1}$ if $x_j=1$ for all $j\in J$.
If $I_i$ is an interval for each $i\in J$, then $I_J$ 
denotes the Cartesian product of these intervals.
If we are given functions $f_i:I_i\to \IR$ for each $i\in J$,
their tensor product is denoted by $f_J:I_J\to \IR$.
Throughout the paper, $r$ and $d$ are natural numbers,
$\varepsilon$ is an element of $(0,1)$ and $M$ is positive.
The natural logarithm of a positive number $a$ is denoted by $\ln a$,
its logarithm in base two by $\log_2 a$.

\section{Algorithms}
\label{algorithms section}

We start with the observation 
that the construction of an $\varepsilon$-detector
is sufficient to achieve the worst case error $\varepsilon$
with the algorithm $A_{P,m}$.
Recall that a point set $P$ in $[0,1]^d$ is called
an \mbox{$\varepsilon$-detector} for $F_{r,M}^d$,
if it contains a non-zero of any function
$f\in F_{r,M}^d$ with $\norm{f}_\infty > \varepsilon$.
Note that any such function is of the following form:
\begin{equation}
\label{target function}
\begin{split}
 f= \bigotimes\limits_{i=1}^d f_i,
 \quad &\text{where} \quad f_i:[0,1]\to[-1,1]
 \quad \text{with} \quad \Vert f_i^{(r)}\Vert_\infty \leq M \\
 &\text{and} \quad \norm{f}_\infty = \prod\limits_{i=1}^d \norm{f_i}_\infty > \varepsilon.
\end{split}
\end{equation}

\begin{lem}
\label{detector lemma}
 Let $r\in\IN$, $d\in\IN$ and $M>0$.
 If $P$ is an $\varepsilon$-detector for $F_{r,M}^d$
 and $m$ is chosen as in (\ref{eq: n_chosen_for_err_eps}), 
 then
 Algorithm~\ref{generic algorithm} satisfies
 \begin{equation*}
  \er(A_{P,m}) \leq \varepsilon \quad\text{and}\quad 
  \cost(A_{P,m}) \leq \abs{P} + m.
 \end{equation*}
\end{lem}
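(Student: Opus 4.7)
The plan is to split into two cases depending on whether the input $f\in F_{r,M}^d$ has uniform norm bigger than $\varepsilon$ or not, bound the worst case error separately in each case, and then count function evaluations to handle the cost bound.

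First I would fix an arbitrary $f\in F_{r,M}^d$ and consider the case $\norm{f}_\infty > \varepsilon$. By the defining property of an $\varepsilon$-detector, the set $P$ must contain at least one point $z^*$ with $f(z^*)\neq 0$. Thus in Step~2 of Algorithm~\ref{generic algorithm} the algorithm branches into the call $I_m(z^*,f)$, so that by the choice of $m$ as in \eqref{eq: n_chosen_for_err_eps} and the guarantee \eqref{eq: known_z_star} we obtain $\norm{A_{P,m}(f)-f}_\infty=\norm{I_m(z^*,f)-f}_\infty\le\varepsilon$.

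Next I would turn to the complementary case $\norm{f}_\infty\le\varepsilon$. Here there are two sub-branches of the algorithm: either the evaluation on $P$ finds a non-zero $z^*$, in which case the same argument as above (invoking \eqref{eq: known_z_star}) yields $\norm{A_{P,m}(f)-f}_\infty\le\varepsilon$; or $f_{\mid P}=0$, in which case the algorithm outputs the zero function and the error equals $\norm{f}_\infty\le\varepsilon$. Either way the bound $\er(A_{P,m})\le\varepsilon$ follows after taking the supremum over $f\in F_{r,M}^d$.

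Finally, for the cost I would simply observe that Step~1 spends exactly $|P|$ function values, and Step~2 either returns the zero function (no further evaluations) or invokes $I_m(z^*,f)$, which by \eqref{eq: n_chosen_for_err_eps} consumes at most $m$ additional values of $f$. Summing and taking the supremum gives $\cost(A_{P,m})\le\abs{P}+m$. There is no real obstacle: the argument is just an unpacking of the definitions of detector, algorithm, and the guarantee in \eqref{eq: known_z_star}; the only subtlety worth stating explicitly is that the detector property is used only in the case $\norm{f}_\infty>\varepsilon$, while the trivial zero-approximation absorbs the remaining case.
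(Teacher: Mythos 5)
Your proof is correct and is essentially the same as the paper's, just with the case split taken on $\norm{f}_\infty$ instead of on whether $P$ finds a non-zero (the paper's split avoids your sub-branching in the second case, but the two arguments are interchangeable). Both rest on the same two facts: the guarantee \eqref{eq: known_z_star} when a non-zero is found, and the detector property forcing $\norm{f}_\infty\le\varepsilon$ when $f_{\mid P}=0$.
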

\begin{proof}
Let $f\in F_{r,M}^d$.
If $P$ contains a non-zero of $f$,
Algorithm~\ref{generic algorithm}
returns an \mbox{$\varepsilon$-approximation} of $f$
due to relation (\ref{eq: known_z_star}).
If not, the output is zero.
But since $P$ is a detector, we necessarily have
$\Vert f\Vert_\infty\leq \varepsilon$ and zero
is an $\varepsilon$-approximation of $f$, as well.
The second statement is obvious.
\end{proof}

Furthermore, we will use the following formula
for polynomial interpolation.

\begin{lem}
 \label{interpolation lemma}
 Let $a<b$, $r\in\IN$ and $g\in W_\infty^r([a,b])$.
 Let $x_1,\hdots,x_r\in[a,b]$ be distinct
 and $p$ be the unique polynomial with degree less than $r$
 such that $p(x_i)=g(x_i)$ for all $i\in[r]$.
 For every $x\in[a,b]$, 
 there exist $\xi_1,\xi_2\in[a,b]$ such that
 $$
 g(x)-p(x) =
 \frac{1}{r!} \cdot
 \frac{g^{(r-1)}\brackets{\xi_2}-g^{(r-1)}\brackets{\xi_1}}{\xi_2-\xi_1} \cdot
 \prod\limits_{i=1}^r \brackets{x-x_i} .
 $$
\end{lem}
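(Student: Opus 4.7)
The plan is to adapt the classical proof of the Lagrange remainder formula to the Sobolev setting. The only deviation from the textbook argument is that, since $g^{(r)}$ exists only weakly, we cannot evaluate it at a single point; however, the lower-order derivatives $g',\dots,g^{(r-1)}$ admit continuous representatives, with $g,g',\dots,g^{(r-2)}$ even in $C^1$, because $g^{(r-1)}$ is absolutely continuous as an antiderivative of the bounded function $g^{(r)}$. This is precisely what lets Rolle's theorem, applied $r-1$ times, still work.

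If $x\in\set{x_1,\dots,x_r}$ there is nothing to prove: both sides vanish for any choice of $\xi_1\neq\xi_2$. Otherwise, I would set
\[
\lambda:=\frac{g(x)-p(x)}{\prod_{i=1}^r(x-x_i)},\qquad
\phi(t):=g(t)-p(t)-\lambda\prod_{i=1}^r(t-x_i),
\]
so that $\phi$ vanishes at the $r+1$ distinct points $x,x_1,\dots,x_r\in[a,b]$. Since $\phi,\phi',\dots,\phi^{(r-2)}$ are continuously differentiable on $[a,b]$ with classical derivatives $\phi',\dots,\phi^{(r-1)}$, iterated application of Rolle's theorem yields two distinct zeros $\xi_1<\xi_2$ of $\phi^{(r-1)}$ in $[a,b]$.

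To finish, I would compute $\phi^{(r-1)}$ explicitly. The polynomial $p$ has degree less than $r$, so $p^{(r-1)}$ is some constant $c_1$; and $\prod_{i=1}^r(t-x_i)$ has leading term $t^r$, so its $(r-1)$st derivative is $r!\,t+c_2$ for some constant $c_2$. Hence
\[
\phi^{(r-1)}(t)=g^{(r-1)}(t)-c_1-\lambda\brackets{r!\,t+c_2}.
\]
Subtracting the identity $\phi^{(r-1)}(\xi_1)=0$ from $\phi^{(r-1)}(\xi_2)=0$ cancels the two constants and gives $g^{(r-1)}(\xi_2)-g^{(r-1)}(\xi_1)=\lambda\cdot r!\cdot(\xi_2-\xi_1)$. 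Solving for $\lambda$ and substituting back into its definition produces the claimed formula. The only genuine obstacle is the iterated Rolle step, which is handled by the regularity observation from the first paragraph; the rest is algebraic bookkeeping.
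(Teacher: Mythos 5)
Your proof is correct and follows the same route as the paper: the trivial case, the auxiliary function $\phi=g-p-\lambda w$, and $r-1$ applications of Rolle to extract two zeros $\xi_1,\xi_2$ of $\phi^{(r-1)}$. The only cosmetic difference is the last step: you evaluate $\phi^{(r-1)}$ in closed form and subtract, whereas the paper integrates $\phi^{(r)}=g^{(r)}-\lambda r!$ from $\xi_1$ to $\xi_2$ and invokes the fundamental theorem of calculus for absolutely continuous functions; both lead to the same identity.
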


Lemma~\ref{interpolation lemma} is well known for $g\in\mathcal{C}^r([a,b])$.
In this case, the second fraction can be 
replaced by $g^{(r)}(\xi)$ for some $\xi\in [a,b]$.
We refer to \cite[Theorem~2, Section~6.1]{KiCh91}.
Under the more general assumption that $g\in W_\infty^r([a,b])$,
we have to modify the proof of the mentioned theorem.

\begin{proof}
 If $x$ coincides with one of the nodes, the statement is trivial.
 Hence, let $x$ be distinct from all the nodes.
 We consider
 $$
 w: [a,b]\to \IR, \quad w(y)=\prod\limits_{i=1}^r \brackets{y-x_i}
 $$
 and set
 $$
 \lambda = \frac{g(x)-p(x)}{w(x)}.
 $$
 The function $\phi=g-p-\lambda w$
 vanishes at the points $x_1,\hdots,x_r$ and $x$.
 Since $g$ and $\phi$ are elements of $W_\infty^r([a,b])$,
 their $(r-1)^{\text{st}}$ derivatives are absolutely continuous.
 If we apply Rolle's Theorem $(r-1)$ times,
 we obtain that $\phi^{(r-1)}$ has at least 2 distinct zeros $\xi_1$
 and $\xi_2$ in $[a,b]$ and hence
 $$
 0=\int_{\xi_1}^{\xi_2} \phi^{(r)}(y)~\d y
 =\int_{\xi_1}^{\xi_2} g^{(r)}(y)-\lambda r! ~\d y
 = g^{(r-1)}\brackets{\xi_2}-g^{(r-1)}\brackets{\xi_1}
 - \lambda r! \brackets{\xi_2-\xi_1}.
 $$
 This is the stated identity in disguise.
\end{proof}
If $g\in W_\infty^r([0,1])$ has $r$ distinct zeros $x_1,\dots,x_r\in [0,1]$, 
and $x$ is a maximum point of $\abs{g}$, we get
\begin{equation} \label{eq: polyn_interpol}
 \norm{g}_\infty \leq \frac{\norm{g^{(r)}}_\infty}{r!} \prod_{i=1}^r \abs{x-x_i}.
\end{equation}
This follows from Lemma~\ref{interpolation lemma} since the unique polynomial
$p$ with degree less than $r$ and $p(x_i)=g(x_i)$ for $i\in [r]$ is the zero polynomial
and
\[
\abs{g^{(r-1)}\brackets{\xi_2}-g^{(r-1)}\brackets{\xi_1}} =
\abs{\int_{\xi_1}^{\xi_2} g^{(r)}(y)~\d y}
\leq \norm{g^{(r)}}_\infty\cdot \abs{\xi_2-\xi_1}.
\]

The rest of this section is devoted to
the construction of small $\varepsilon$-detectors for $F_{r,M}^d$.
Thanks to Lemma~\ref{detector lemma},
this is sufficient to prove Theorem~\ref{thm: thm_upp_bnd}.
We will use three different strategies for
three different ranges of the parameter $M$.

\subsection{Detectors for large derivatives}
\label{Large Derivatives}

In this section, the smoothness parameter $M$ can be arbitrarily large.
It is shown in \cite{NoRu16} that
the cost of any algorithm with worst case error smaller than $1$
is at least $2^d$ if $M\geq 2^r r!$.
In particular, the cardinality of 
any \mbox{$\varepsilon$-detector} must grow exponentially with the dimension.
Yet, it does not get any worse:
We construct an \mbox{$\varepsilon$-detector} whose cardinality
``only'' grows exponentially with the dimension
but not super-exponentially.
We use the following lemma.

\begin{lem}
\label{empty interval lemma}
 For each $g\in W_\infty^r([0,1])$ with $\Vert g^{(r)}\Vert_\infty \leq M$
 there is a subinterval of $[0,1]$ with length 
 \begin{equation*}
  L(g)=\min\set{\frac{1}{r},\brackets{\frac{\norm{g}_\infty}{M}}^{1/r}}
 \end{equation*}
 which does not contain any zero of $g$.
\end{lem}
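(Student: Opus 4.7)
The plan is a proof by contradiction. I would suppose the conclusion fails for some $g \in W_\infty^r([0,1])$, so that every open subinterval of $[0,1]$ of length $L(g)$ meets the zero set $Z := g^{-1}(\{0\})$. After assuming $\norm{g}_\infty > 0$, picking $x^* \in [0,1]$ with $|g(x^*)| = \norm{g}_\infty$, and reflecting in the midpoint if necessary to place $x^* \in [0, 1/2]$, the goal is to derive a contradiction from the interpolation estimate (\ref{eq: polyn_interpol}) applied to $r$ carefully chosen zeros of $g$ near $x^*$. I would work with an arbitrary $L < L(g)$ and then let $L \nearrow L(g)$ at the end.

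Fix $L < L(g)$. Since $L < 1/r \le 1$, the integer $n = \lfloor 1/L \rfloor$ satisfies $n \ge r$ and $nL > 1/2$. The disjoint open intervals $I_j = ((j-1)L, jL)$ for $j = 1,\dots,n$ lie in $[0,1]$, and the contradiction hypothesis forces each $I_j$ to contain some zero $y_j$ of $g$; these $y_j$ are pairwise distinct. Because $x^* \le 1/2 < nL$, the point $x^*$ lies in one of the intervals, say $I_{j^*}$ (boundary positions are harmlessly handled by a small perturbation).

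From the positions of $I_j$ and $I_{j^*}$ one obtains the bound $|x^* - y_j| < (|j - j^*| + 1)L$ for every $j \in \{1,\dots,n\}$. I would choose nonnegative integers $a, b$ with $a + b = r - 1$, $a \le j^* - 1$, and $b \le n - j^*$, which is possible because $j^* - 1 + n - j^* = n - 1 \ge r - 1$. Plugging the $r$ distinct zeros $y_{j^*-a}, \dots, y_{j^*+b}$ into (\ref{eq: polyn_interpol}) at $x = x^*$ gives
\[
 \norm{g}_\infty \;\le\; \frac{M}{r!} \prod_{i=-a}^{b} |x^* - y_{j^*+i}|
 \;<\; \frac{M}{r!}\,(a+1)!\,(b+1)!\,L^r
 \;\le\; M L^r,
\]
where the final step uses $(a+1)!(b+1)! \le r!$, which in turn follows from $\binom{r+1}{a+1} \ge r+1$ (since $(a+1) + (b+1) = r + 1$). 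But $L < L(g) \le (\norm{g}_\infty/M)^{1/r}$ gives $M L^r < \norm{g}_\infty$, contradicting the displayed chain.

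Consequently, for every $L < L(g)$ a zero-free open subinterval of $[0,1]$ of length $L$ exists, and a standard compactness argument (take a convergent subsequence of left endpoints as $L \nearrow L(g)$) will produce a subinterval of length exactly $L(g)$ on whose interior $g$ does not vanish. The one delicate point is the combinatorial bookkeeping around $x^*$: the $r$ selected indices have to be split into a left block of size $a$ and a right block of size $b$, each distance bounded via the tiling, and finally the factorial inequality invoked to collapse the bound to $r!\,L^r$; the remaining ingredients---the tiling, the distance bound, and the limit $L \nearrow L(g)$---are routine.
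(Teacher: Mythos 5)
Your proof is correct and rests on the same two ingredients as the paper's: the interpolation estimate \eqref{eq: polyn_interpol} applied to $r$ zeros near a maximizer $x^*$ of $|g|$, and a tiling argument to locate those zeros. But the bookkeeping differs and you have introduced two complications that the paper avoids, one of them masking a small logical slip.

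First, the paper does not tile all of $[0,1]$. It picks a single window $I\subset[0,1]$ of length $rL(g)$ containing $x^*$, tiles it into $r$ open subintervals, and \emph{orders those subintervals by their distance to $x^*$}. This ordering gives $|x^*-x_i|<iL(g)$ immediately, so the product is at once bounded by $r!\,L(g)^r$ with no combinatorial input. You instead take a contiguous block of $r$ zeros around $x^*$ (a left run of length $a$, a right run of length $b$), get the weaker bound $(a+1)!(b+1)!\,L^r$, and then invoke $(a+1)!(b+1)!\le r!$ (equivalently $\binom{r+1}{a+1}\ge r+1$). That inequality is correct and the argument goes through, but it is an extra step that the distance ordering makes unnecessary.

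Second, your detour through $L<L(g)$ and the closing compactness argument is not needed. Because $L(g)\le 1/r$, the quantity $\lfloor 1/L(g)\rfloor\ge r$, so your tiling argument already works with $L=L(g)$ itself; and the strict inequality $\prod_i|x^*-y_j|<r!\,L(g)^r$ (strict because each $y_j$ lies in an open interval and $y_j\ne x^*$) combined with $M L(g)^r\le\norm{g}_\infty$ already yields the contradiction $\norm{g}_\infty<\norm{g}_\infty$. This also removes the one genuine flaw in the write-up as you have it: you open by assuming "every open subinterval of length $L(g)$ meets $Z$" and then assert that this "forces each $I_j$ [of length $L<L(g)$] to contain some zero." That implication does not hold --- a shorter interval can sit inside a longer one and miss its zeros. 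The intended, correct structure is: for each fixed $L\le L(g)$, assume for contradiction that every open subinterval of length $L$ meets $Z$, and derive the contradiction as you do. Taking $L=L(g)$ directly closes the argument and the compactness step disappears.

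In short: correct conclusion, same core mechanism as the paper, but with an avoidable combinatorial lemma, an avoidable limit, and a misplaced contradiction hypothesis that should be restated at the scale actually being tiled.
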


\begin{proof}
 The function $\abs{g}$ attains its maximum, say for $x\in [0,1]$.
 We choose an interval $I\subset [0,1]$ of length $rL(g)$ that contains $x$.
 There are $r$ open and disjoint 
 subintervals of $I$ with length $L(g)$.
 We label these intervals $I_1,\dots,I_r$ such that the distance 
 of $x$ and $I_i$ is increasing with $i$.
 Assume that every interval $I_i$ contains a zero $x_i$ of $g$.
 Then we have $\abs{x-x_i}< iL(g)$ for all $i\in [r]$
 and \eqref{eq: polyn_interpol} leads to
 \[
  \norm{g}_\infty
  \leq \frac{M}{r!} \prod_{i=1}^r \abs{x-x_i} 
  < M L(g)^r
  \leq \norm{g}_\infty.
 \]
 This is a contradiction and the assertion is proven.
\end{proof}

If, in addition, the uniform norm of $g$ is bounded by $1$, we have
\begin{equation*}
 L(g)\geq \varrho^{-1}\norm{g}_\infty^{1/r}  \quad\text{for}\quad
 \varrho=\max\set{r,M^{1/r}}.
\end{equation*}
Hence, for every $f$ satisfying \eqref{target function}
there is a box $B$ in $[0,1]^{d}$ with volume
\begin{align*}
 &\prod_{i\in [d]} L\brackets{f_i}
 \geq \varrho^{-d} \prod_{i\in [d]} \norm{f_i}_\infty^{1/r}
 = \varrho^{-d} \norm{f}_\infty^{1/r}
 > \varrho^{-d} \varepsilon^{1/r}
\end{align*}
such that $f$ does not vanish anywhere on $B$.
Hence, any point set $P$ in $[0,1]^{d}$ with dispersion
$\varrho^{-d} \varepsilon^{1/r}$ or less 
is an $\varepsilon$-detector for $F_{r,M}^d$.
We know from the estimate of Larcher, see \cite{ahr}, 
that we can choose $P$ as a $(t,s,d)$-net with cardinality
\begin{equation*}
 \abs{P} = \left\lceil 2^{7d+1} \varrho^d \varepsilon^{-1/r}\right\rceil.
\end{equation*}
By Lemma~\ref{detector lemma},
the resulting algorithm achieves the worst case error $\varepsilon$ with 
the cost
\begin{equation*}
 \cost\brackets{A_{P,m}}\leq
 \left\lceil 2^{7d+1} \varrho^d \varepsilon^{-1/r} \right\rceil
 + C_{r,M}\, d^{1+1/r} \varepsilon^{-1/r}.
\end{equation*}
This proves the first statement of Theorem~\ref{thm: thm_upp_bnd} 
with $c_1=2^8\rho+C_{r,M}$.
Note that the cost of this algorithm
has the minimal order of growth with respect to $\varepsilon$.
It grows like $\varepsilon^{-1/r}$ if $d$ is fixed
and $\varepsilon$ tends to zero.

\subsection{Detectors for moderately large derivatives}
\label{Moderate Derivatives}

In this section, we assume that $M < 2^r r!$.
In this case, we construct detectors $P$
with a cardinality that only grows polynomially with $d$
for any fixed $\varepsilon$.
The construction of $P$
is based on the observation that for any
function $f$ from $\eqref{target function}$
only some of the factors $f_i$
can have more than $(r-1)$ zeros close to $1/2$.
This is 
an essential difference
to the case $M\in [2^r r!,\infty)$,
where all factors $f_i$ may have infinitely many zeros
in any neighborhood of $1/2$.
We are going to specify this statement in Lemma~\ref{harmless functions},
but first we need the following observation.
For $\delta\in (0,1/2]$, we consider the interval
$I_\delta := [1/2-\delta,1/2+\delta]$.

\begin{lem}
\label{zeros at 1/2}
 Let $g\in W_\infty^r([0,1])$ with $\Vert g^{(r)}\Vert_\infty \leq M$.
  Assume that $g$ has $r$ distinct zeros in
  $I_\delta$. Then
  \[
  \norm{g}_\infty\leq C_\delta := \frac{M(1+2\delta)^r}{2^r r!}.
  \]
\end{lem}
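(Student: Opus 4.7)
The plan is to apply the polynomial interpolation estimate already established in \eqref{eq: polyn_interpol} directly to the function $g$, using its $r$ distinct zeros in $I_\delta$ as the interpolation nodes.

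First, I would let $x_1,\dots,x_r\in I_\delta$ denote the $r$ distinct zeros of $g$, and let $x\in[0,1]$ be a point at which $|g|$ attains its maximum. Since the nodes $x_i$ are zeros of $g$, the unique polynomial of degree less than $r$ interpolating $g$ at these nodes is identically zero, so \eqref{eq: polyn_interpol} yields
\[
\norm{g}_\infty \leq \frac{\norm{g^{(r)}}_\infty}{r!}\prod_{i=1}^r\abs{x-x_i}
\leq \frac{M}{r!}\prod_{i=1}^r\abs{x-x_i}.
\]

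Next, I would bound each factor $|x-x_i|$. Since $x\in[0,1]$ and $x_i\in[1/2-\delta,1/2+\delta]$, the distance $|x-x_i|$ is maximized in the worst case by taking $x$ at one endpoint of $[0,1]$ and $x_i$ at the opposite endpoint of $I_\delta$, giving $|x-x_i|\leq 1/2+\delta$. Substituting this uniform bound yields
\[
\norm{g}_\infty \leq \frac{M}{r!}\brackets{\tfrac{1}{2}+\delta}^r = \frac{M(1+2\delta)^r}{2^r\, r!} = C_\delta,
\]
as claimed.

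This argument is entirely mechanical once \eqref{eq: polyn_interpol} is available; there is no real obstacle. The only minor point worth verifying is that the crude estimate $|x-x_i|\leq 1/2+\delta$ is the correct worst case, which follows from the elementary observation that the farthest point in $[0,1]$ from any fixed $y\in I_\delta$ lies at distance at most $\max\{y,1-y\}\leq 1/2+\delta$.
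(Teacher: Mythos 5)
Your proof is correct and follows the same approach as the paper: apply the interpolation estimate \eqref{eq: polyn_interpol} with the $r$ zeros as nodes and bound each factor $\abs{x-x_i}$ by $1/2+\delta$. The paper's own proof is just a terser version of yours.
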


\begin{proof}
 Let $x_1,\dots,x_r$ be those zeros.
 The function $\abs{g}$ attains its maximum, say for $x\in [0,1]$.
 By \eqref{eq: polyn_interpol} we have
 \[
  \norm{g}_\infty
  \leq \frac{\norm{g^{(r)}}_\infty}{r!} \prod_{i=1}^r \abs{x-x_i}.
 \]
 This yields the desired inequality since $\abs{x-x_i}\leq 1/2+\delta$ for each $i\in [r]$.
\end{proof}

Since $M < 2^r r!$, we can choose $\delta\in(0,1/2]$ such that $C_\delta <1$.
We define the pseudo-dimension
$d_0$ as the largest number in $[d]\cup\{0\}$ that satisfies
$C_\delta^{d_0} >\varepsilon$, that is
\begin{equation*}
 d_0 := \min\set{\left\lceil \frac{\ln \varepsilon}{\ln C_\delta}\right\rceil -1, d}.
\end{equation*}
Obviously, the pseudo-dimension is bounded above independently of $d$.
We can now specify the statement from the beginning of this section.

\begin{lem}
\label{harmless functions}
 Let $f$ be given as in (\ref{target function}).
 Then there are at most $d_0$ coordinates $i\in [d]$
 such that $f_i$ has more than $(r-1)$ zeros in $I_\delta$.
\end{lem}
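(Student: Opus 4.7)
The plan is to exploit the multiplicative structure of rank one tensors together with Lemma~\ref{zeros at 1/2}. Let $J \subseteq [d]$ denote the set of those coordinates $i$ for which $f_i$ possesses at least $r$ (distinct) zeros in $I_\delta$; our goal is to show $\abs{J}\leq d_0$. This will immediately imply the claim, since the Lemma says ``more than $r-1$'' zeros, which is the same as ``at least $r$'' for distinct zeros.

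First I would invoke Lemma~\ref{zeros at 1/2} coordinatewise: for every $i\in J$, one has $\norm{f_i}_\infty\leq C_\delta$. For the remaining coordinates $i\notin J$, I simply use the trivial bound $\norm{f_i}_\infty\leq 1$ from the assumption that $f_i$ maps into $[-1,1]$. Multiplying these estimates and using the product form of $\norm{f}_\infty$ from \eqref{target function}, I obtain
\[
\varepsilon < \norm{f}_\infty = \prod_{i=1}^d \norm{f_i}_\infty \leq C_\delta^{\abs{J}}.
\]

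The final step is to extract the bound on $\abs{J}$. Since $\delta$ was chosen so that $C_\delta<1$, we have $\ln C_\delta <0$, and also $\ln \varepsilon<0$. Taking logarithms of the inequality $C_\delta^{\abs{J}}>\varepsilon$ and dividing by the negative quantity $\ln C_\delta$ flips the inequality to yield
\[
\abs{J} < \frac{\ln \varepsilon}{\ln C_\delta}.
\]
Because $\abs{J}$ is a non-negative integer, this forces $\abs{J}\leq \lceil \ln\varepsilon/\ln C_\delta\rceil -1$. Combining with the trivial bound $\abs{J}\leq d$ gives $\abs{J}\leq d_0$, as desired.

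The proof is essentially a one-line computation once Lemma~\ref{zeros at 1/2} is in place; the only mild subtlety is being careful with the direction of the inequality when dividing by $\ln C_\delta <0$ and with the conversion from a strict inequality on a real number to a non-strict inequality on the integer $\abs{J}$ via the ceiling function, which is exactly how $d_0$ is defined.
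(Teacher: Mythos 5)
Your proof is correct and follows essentially the same route as the paper: apply Lemma~\ref{zeros at 1/2} coordinatewise on the set $J$ of ``bad'' indices, multiply to get $\varepsilon < \norm{f}_\infty \leq C_\delta^{\abs{J}}$, and conclude $\abs{J}\leq d_0$ from the definition of $d_0$. The paper compresses the last step to ``the maximality of $d_0$''; you simply unpack the logarithm and ceiling arithmetic that justifies it.
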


\begin{proof}
 Let $k$ be the number of coordinates $i\in[d]$
 for which $f_i$ has more than $(r-1)$ zeros in $I_\delta$.
 Lemma~\ref{zeros at 1/2} yields that $\varepsilon < \Vert f\Vert_\infty \leq C_\delta^k$.
 The maximality of $d_0$ yields that $k\leq d_0$.
\end{proof}

This means that there is a subset $J^*$ of $[d]$ with cardinality $d_0$
such that $f_i$ has at most $(r-1)$ zeros in $I_\delta$ for all $i\in [d]\setminus J^*$.
We can find a non-zero of $f$, if we solve the following tasks:
\begin{enumerate}
 \item Find $J^*$.
 \inlineitem Find a non-zero of $f_{J^*}$.
 \inlineitem Find a non-zero of $f_{[d]\setminus J^*}$.
\end{enumerate}
Let us go through these tasks one by one.
We will deal with the first task by simply going through all possible sets
$J\subset [d]$ of cardinality $d_0$.
The number of such sets only depends polynomially on $d$.
We can cope with the second task,
since this problem is only $d_0$-dimensional.
By Lemma~\ref{empty interval lemma}, 
there is a box $B$ in $[0,1]^{d_0}$ with volume
\begin{align*}
 &\prod_{i\in J^*} L\brackets{f_i}
 \geq \prod_{i\in J^*} \varrho^{-1} \norm{f_i}_\infty^{1/r}
 \geq \varrho^{-d_0} \prod_{i\in [d]} \norm{f_i}_\infty^{1/r}
 = \varrho^{-d_0} \norm{f}_\infty^{1/r}
 > \varrho^{-d_0} \varepsilon^{1/r}
\end{align*}
such that $f_{J^*}$ does not vanish on $B$.
Hence, any point set $P_1$ in $[0,1]^{d_0}$ with dispersion
$\varrho^{-d_0} \varepsilon^{1/r}$ or less contains a non-zero of $f_{J^*}$.
Again by the result of Larcher, see \cite{ahr}, 
we know that we can choose $P_1$ as a $(t,s,d)$-net
of cardinality $2^{7d_0+1} \varrho^{d_0} \varepsilon^{-1/r}$.
The third task is also feasible,
since $f_i$ has at most $(r-1)$ zeros in $I_\delta$
for all $i\in [d]\setminus J^*$.
We use the following observation.

\begin{lem}
\label{diagonal lemma}
 Let $J$ be an $\ell$-element subset of $[d]$
 and, for every $i\in J$, let $f_i$ be a function with at most
 $k$ zeros on some interval $I_i$.
 Then every $(\ell k+1)$-element set in $I_J$
 whose elements are pairwise distinct in every coordinate
 contains a non-zero of $f_J$.
\end{lem}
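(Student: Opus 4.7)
The plan is a short counting argument. A point $x = (x_i)_{i \in J}$ in $I_J$ is a zero of $f_J = \bigotimes_{i \in J} f_i$ if and only if at least one coordinate $x_i$ is a zero of the corresponding univariate factor $f_i$. So I would call a point $x$ in the given set $S \subset I_J$ \emph{bad in coordinate $i$} if $f_i(x_i) = 0$, and merely \emph{bad} if it is bad in some coordinate; then I need to show that not every point of $S$ is bad.

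The key observation is the pairwise-distinctness hypothesis: for each fixed $i \in J$, the values $\{x_i : x \in S\}$ are $|S| = \ell k + 1$ pairwise distinct points of $I_i$. Since $f_i$ has at most $k$ zeros on $I_i$, at most $k$ of these coordinates can coincide with a zero of $f_i$, so at most $k$ points of $S$ are bad in coordinate $i$.

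Summing over the $\ell$ indices $i \in J$ then gives at most $\ell k$ bad points in $S$. Because $|S| = \ell k + 1 > \ell k$, at least one point $x \in S$ is not bad, i.e., $f_i(x_i) \neq 0$ for every $i \in J$, which means $f_J(x) \neq 0$.

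There is no real obstacle here; the whole argument is a one-line pigeonhole once the correct definitions are unwound. The only thing to be careful about is to use the hypothesis that the points of $S$ are pairwise distinct \emph{in every coordinate separately}, rather than just pairwise distinct as vectors — without this, several points of $S$ could share the same $i$-th coordinate and a single zero of $f_i$ could spoil more than $k$ of them, breaking the count.
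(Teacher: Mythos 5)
Your proof is correct and is essentially the same argument as in the paper: the paper assumes for contradiction that $f_J$ vanishes on all of $P$, writes $P = \bigcup_{i \in J} P_i$ with $P_i = \{x_J \in P : f_i(x_i) = 0\}$, and applies pigeonhole to get some $|P_i| > k$, which the distinct-coordinate hypothesis turns into more than $k$ zeros of $f_i$. You simply run the same count directly (at most $k$ ``bad'' points per coordinate, hence at most $\ell k$ total, leaving one good point) rather than by contradiction, and you correctly flag the role of pairwise distinctness in each coordinate.
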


\begin{proof}
 Let $P$ be an $(\ell k+1)$-element set in $I_J$
 whose elements are pairwise distinct in every coordinate 
 and suppose that $f_J$ vanishes everywhere on $P$.
 For each $i\in J$ let
 $P_i=\set{x_J\in P \mid f_i(x_i)=0}$.
 Since $f_J(x_J)=0$ implies that there is some $i\in J$ with $f_i(x_i)=0$, we have
 $P=\bigcup_{i\in J} P_i$.
 This can only be true, if one of the sets $P_i$ has more than $k$ elements.
 But since $x_i$ is different for every $x_J\in P_i$,
 this means that the corresponding function $f_i$ has more than $k$ zeros, a contradiction.
\end{proof}

We 
apply this lemma
for the functions $f_i$ in~\eqref{target function},
for the index set $J=[d]\setminus J^*$ and
for $k=r-1$. We obtain that the diagonal set
\begin{equation*}
 P_2=\set{\brackets{\frac{1}{2} -\delta + \frac{2\delta j}{(r-1)(d-d_0)}}\cdot \boldsymbol 1
 \mid j\in\IN_0 \text{ with } j\leq (r-1)(d-d_0)}
\end{equation*}
in $[0,1]^{d-d_0}$ contains a non-zero of $f_{[d]\setminus J^*}$.
All together, we obtain the detector
\begin{equation*}
P=\bigcup_{J\subset [d]:\, \abs{J}=d_0}
 \set{x\in [0,1]^d \mid x_J\in P_1,\, x_{[d]\setminus J}\in P_2 }.
\end{equation*}
In fact, we have seen above that for any $f$ satisfying~\eqref{target function}
there must be some $J^*\subset [d]$ with $\abs{J^*}=d_0$,
a non-zero $y\in P_1$ of $f_{J^*}$
and a non-zero $z\in P_2$ of $f_{[d]\setminus J^*}$.
The point $x\in [0,1]^d$ with $x_{J^*}=y$ and $x_{[d]\setminus J^*}=z$
is contained in the set $P$ and a non-zero of $f$.
The cardinality of the detector is given by
\begin{equation*}
\abs{P}
 = \binom{d}{d_0} \abs{P_1} \abs{P_2}
 = \binom{d}{d_0} \left[(r-1)(d-d_0)+1\right] 2^{7d_0+1} \varrho^{d_0} \varepsilon^{-1/r}.
\end{equation*}
This number grows like $d^{d_0+1}$
if $\varepsilon$ is fixed and $d$ tends to infinity.
Together with Lemma~\ref{detector lemma},
this proves the second statement of Theorem~\ref{thm: thm_upp_bnd} with
\[
 c_2=2r+C_{r,M},
 \quad\text{and}\quad
 c_3=\ln(2^7\rho)\left(1+1/\ln(C_\delta^{-1})\right).
\]
Note that 
$d_0$ equals $d$ 
if $\varepsilon$ is small enough.
Hence, the cardinality of $P$ and the cost 
of the algorithm grows like $\varepsilon^{-1/r}$
if $d$ is fixed and $\varepsilon$ tends to zero,
which is optimal.

\subsection{Detectors for small derivatives}
\label{Small Derivatives}

In this section, we assume that $M\leq r!$.
In this case, each function $f$ satisfying~\eqref{target function} 
does not vanish almost everywhere on a box whose size is independent of $d$.
This is due to the following fact.

\begin{lem}
\label{almost empty interval lemma}
 For each $g\in W_\infty^r([0,1])$ with $\Vert g \Vert_\infty \leq 1$
 and $\Vert g^{(r)}\Vert_\infty \leq r!$ 
 there is an interval in $[0,1]$ with length 
 $\norm{g}_\infty^{1/r}$
 which contains at most $(r-1)$ zeros of $g$.
\end{lem}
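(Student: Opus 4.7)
The plan is to localise around a maximiser of $|g|$: set $\alpha:=\|g\|_\infty^{1/r}$ (so $\alpha\le 1$ since $\|g\|_\infty\le 1$), pick $x^*\in [0,1]$ with $|g(x^*)|=\|g\|_\infty$, and choose a closed subinterval $I\subset [0,1]$ of length $\alpha$ that contains $x^*$. Such an $I$ exists precisely because $\alpha\le 1$. The whole game is to show that this $I$ already contains at most $r-1$ zeros of $g$.

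The argument proceeds by contradiction, fed by the polynomial-interpolation estimate \eqref{eq: polyn_interpol}. Assume $g$ has $r$ distinct zeros $z_1,\dots,z_r$ inside $I$. Using $\|g^{(r)}\|_\infty\le r!$, the estimate gives
\[
 \alpha^r=\|g\|_\infty\le \frac{\|g^{(r)}\|_\infty}{r!}\prod_{i=1}^r |x^*-z_i|\le \prod_{i=1}^r |x^*-z_i|.
\]
On the other hand, since $x^*,z_i\in I$ and $|I|=\alpha$, every factor is bounded by $\alpha$, so $\prod_{i=1}^r |x^*-z_i|\le \alpha^r$. Equality must therefore hold throughout, forcing $|x^*-z_i|=\alpha$ for all $i$.

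The contradiction will come from the following geometric observation, which I consider the only delicate point: in a closed interval of length exactly $\alpha$ containing $x^*$, the only point at distance $\alpha$ from $x^*$ is the endpoint opposite to $x^*$ (and this happens only when $x^*$ itself is an endpoint). Hence at most one $z_i$ can satisfy $|x^*-z_i|=\alpha$, which is incompatible with $r\ge 2$ distinct such zeros.

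This disposes of the case $r\ge 2$; for $r=1$ the statement $\leq 0$ zeros is simply Lemma~\ref{empty interval lemma} applied with $M\le 1!=1$, which yields $L(g)=\min\{1,\|g\|_\infty\}=\alpha$ and thus a subinterval of $[0,1]$ of length $\alpha$ without zeros. The main obstacle is the equality case of the interpolation bound: the inequality \eqref{eq: polyn_interpol} is sharp in general, so one cannot get a contradiction from strictness of the analytic estimate, and the conclusion has to be squeezed out of the combinatorial fact that an interval of length $\alpha$ admits at most one pair of points at mutual distance $\alpha$.
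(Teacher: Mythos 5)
Your proof is correct, but it takes a slightly more roundabout route than the paper. You work with a \emph{closed} interval of length $\alpha=\|g\|_\infty^{1/r}$ containing a maximiser $x^*$ of $|g|$; this makes the bound $|x^*-z_i|\le\alpha$ non-strict, so you are forced into an equality-case analysis (all factors must equal $\alpha$, and then at most one zero can sit at the opposite endpoint), which in turn fails for $r=1$ and requires a separate appeal to Lemma~\ref{empty interval lemma}. The paper sidesteps both complications by choosing an \emph{open} interval $I$ of length $\alpha$ whose \emph{closure} contains $x^*$: then any zero $x_i\in I$ satisfies the strict inequality $|x^*-x_i|<\alpha$, so \eqref{eq: polyn_interpol} immediately yields $\|g\|_\infty<\alpha^r=\|g\|_\infty$, a contradiction valid for every $r\ge1$ in one stroke. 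Your version is a legitimate alternative and shows a good grasp of where the estimate can be tight, but the open-interval trick is the cleaner device here, and it is worth noting that it is exactly the same device used in Lemma~\ref{empty interval lemma} (there via the $r$ open subintervals) — so the paper keeps the two arguments structurally parallel, whereas your reduction of $r=1$ back to Lemma~\ref{empty interval lemma} somewhat obscures that parallelism.
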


\begin{proof}
 The function $\abs{g}$ attains its maximum, say for $x\in [0,1]$.
 We choose an open interval $I\subset [0,1]$ 
 of length $\norm{g}_\infty^{1/r}$ whose closure contains $x$.
 Assume that $I$ contains $r$ distinct zeros $x_1,\hdots,x_r$ of $g$.
 Then $\abs{x-x_i}< \norm{g}_\infty^{1/r}$ for all $i\in [r]$
 and \eqref{eq: polyn_interpol} yields
 \[
  \norm{g}_\infty
  \leq \frac{\norm{g^{(r)}}_\infty}{r!} \prod_{j=1}^r \abs{x-x_j} 
  \leq \prod_{j=1}^r \abs{x-x_j} 
  < \norm{g}_\infty.
 \]
 This is a contradiction and the assertion is proven.
\end{proof}

We now construct an $\varepsilon$-detector for any $\varepsilon\in(0,1)$. 
To this end, let
\begin{equation*}
 \gamma=(1-2^{-1/d})\,\varepsilon^{1/r}.
\end{equation*}
Note that $\gamma$ is smaller than 1/2.
We choose a point set $P_0$ in $[0,1]^d$ whose dispersion is
at most $\varepsilon^{1/r}/2$ and consider
the point set $P$ in $[0,1]^d$, given by
 \begin{equation*}
 P= \set{(1-\gamma)\cdot x+\frac{\gamma j}{(r-1)d}\cdot \boldsymbol 1 
 \mid x\in P_0 \text{ and }
 j\in\IN_0 \text{ with } j\leq (r-1)d}.
\end{equation*}

\begin{lem}
\label{detector for small M}
 The point set $P$ is an \mbox{$\varepsilon$-detector} for $F_{r,M}^d$.
\end{lem}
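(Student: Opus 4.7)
The plan is, for any $f$ satisfying \eqref{target function}, to exhibit a point of $P$ at which $f$ does not vanish. I apply Lemma~\ref{almost empty interval lemma} to each factor $f_i$ to obtain an interval $I_i=[a_i,b_i]\subset[0,1]$ of length $t_i:=\norm{f_i}_\infty^{1/r}$ that contains at most $r-1$ zeros of $f_i$. Set $B=\prod_i I_i$ and $\tau=\gamma/((r-1)d)$ (the case $r=1$ is analogous but requires no diagonal shifts). The strategy is to find $x\in P_0$ such that every point on the diagonal $\{(1-\gamma)x+\tau j\cdot\boldsymbol{1} : j=0,1,\dots,(r-1)d\}\subset P$ lies in $B$. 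Once this is achieved, these $(r-1)d+1$ points are pairwise distinct in every coordinate (consecutive ones differ in each coordinate by $\tau>0$), and Lemma~\ref{diagonal lemma}, applied with $J=[d]$, $\ell=d$, and $k=r-1$, produces a non-zero of $f$ among them.

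Requiring that every diagonal shift belong to $B$ is equivalent to $x\in B_0:=\prod_i\bigl[a_i/(1-\gamma),\,(b_i-\gamma)/(1-\gamma)\bigr]$, which is a box in $[0,1]^d$. A preliminary observation is that $\norm{f_i}_\infty>\varepsilon$ for every $i$: isolating one factor in $\prod_i\norm{f_i}_\infty>\varepsilon$ and using $\norm{f_j}_\infty\leq 1$ for the remaining $j$'s gives this bound. Consequently $t_i>\varepsilon^{1/r}$, which in particular exceeds $\gamma$, so $B_0$ is nonempty. The core quantitative step is to show that $\mathrm{vol}(B_0)>\varepsilon^{1/r}/2$. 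Using $t_i-\gamma\geq (1-\gamma/\varepsilon^{1/r})\,t_i = 2^{-1/d}\,t_i$ (which is precisely the reason for setting $\gamma=(1-2^{-1/d})\varepsilon^{1/r}$) together with $(1-\gamma)^d\leq 1$, I compute
\[
\mathrm{vol}(B_0) \;=\; \frac{1}{(1-\gamma)^d}\prod_{i=1}^d (t_i-\gamma) \;\geq\; \prod_{i=1}^d (t_i-\gamma) \;\geq\; \frac{1}{2}\prod_{i=1}^d t_i \;>\; \frac{\varepsilon^{1/r}}{2}.
\]

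Since $P_0$ has dispersion at most $\varepsilon^{1/r}/2$, the box $B_0$ meets $P_0$; pick any $x$ in the intersection. Then the entire diagonal lies in $B$ and in $P$, and the concluding application of Lemma~\ref{diagonal lemma} yields a non-zero of $f$, finishing the proof. The only real obstacle is the volume estimate, and this is handled cleanly by the calibrated choice of $\gamma$, which is engineered so that each loss factor $(t_i-\gamma)/t_i$ stays above $2^{-1/d}$, making the product survive above $1/2$ uniformly in $d$.
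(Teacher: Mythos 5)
Your proof is correct and follows essentially the same route as the paper's: apply Lemma~\ref{almost empty interval lemma} to each factor, calibrate $\gamma$ so that the shrunk lengths $t_i-\gamma$ stay above $2^{-1/d}t_i$, hit the resulting box with a low-dispersion $P_0$, and finish via Lemma~\ref{diagonal lemma}. The only cosmetic difference is that you work directly with the rescaled box $B_0=\widetilde B/(1-\gamma)$ and compute its volume in one step, whereas the paper first bounds $|\widetilde B|$ and then notes that $\widetilde B/(1-\gamma)\subset[0,1]^d$ has larger volume; the content is identical.
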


\begin{proof}
Let $f$ be given as in \eqref{target function}.
By Lemma~\ref{almost empty interval lemma},
there are intervals $(a_i,b_i)$ in $[0,1]$ with length $\norm{f_i}_\infty^{1/r}$
containing at most $(r-1)$ zeros of $f_i$.
By the choice of $\gamma$, we have
\begin{equation*}
 \gamma 
 \leq (1-2^{-1/d}) \norm{f_i}_\infty^{1/r}.
\end{equation*}
In particular, the box
\begin{equation*}
 \widetilde B= \prod_{i\in [d]} (a_i,b_i-\gamma)
\end{equation*}
is well defined.
In fact, the volume of this box satisfies
\begin{equation*}
 \vert \widetilde B \vert
 = \prod_{i\in [d]} \brackets{\norm{f_i}_\infty^{1/r}-\gamma}
 \geq \prod_{i\in [d]} \brackets{2^{-1/d} \norm{f_i}_\infty^{1/r}}
 = \frac{\norm{f}_\infty^{1/r}}{2}
 > \frac{\varepsilon^{1/r}}{2}.
\end{equation*}
The box $\widetilde B/(1-\gamma)$ is contained in $[0,1]^d$
and even larger than $\widetilde B$.
It hence contains some $x\in P_0$.
Consequently, we have $(1-\gamma) x\in \widetilde B$
and all the points
\begin{equation*}
 x^{(j)}=(1-\gamma)\cdot x+\frac{\gamma j}{(r-1)d}\cdot \boldsymbol 1  \quad\quad\text{for }
 j\in\IN_0 \text{ with } j\leq (r-1)d
\end{equation*}
are elements of $P$.
These are $(r-1)d+1$ points
that are pairwise distinct in every coordinate
and that are all contained in the larger box 
\begin{equation*}
 B= \prod_{i\in [d]} (a_i,b_i).
\end{equation*}
Recall that each function $f_i$ has at most $(r-1)$
zeros in $(a_i,b_i)$.
By Lemma~\ref{diagonal lemma}, one of the points $x^{(j)}$
must be a non-zero of $f$.
As an example, Figure~\ref{box figure} illustrates the
case $d=2$ and $r=3$.
\end{proof}

\begin{figure}[h]
\centering
  \includegraphics[width=0.8\textwidth]{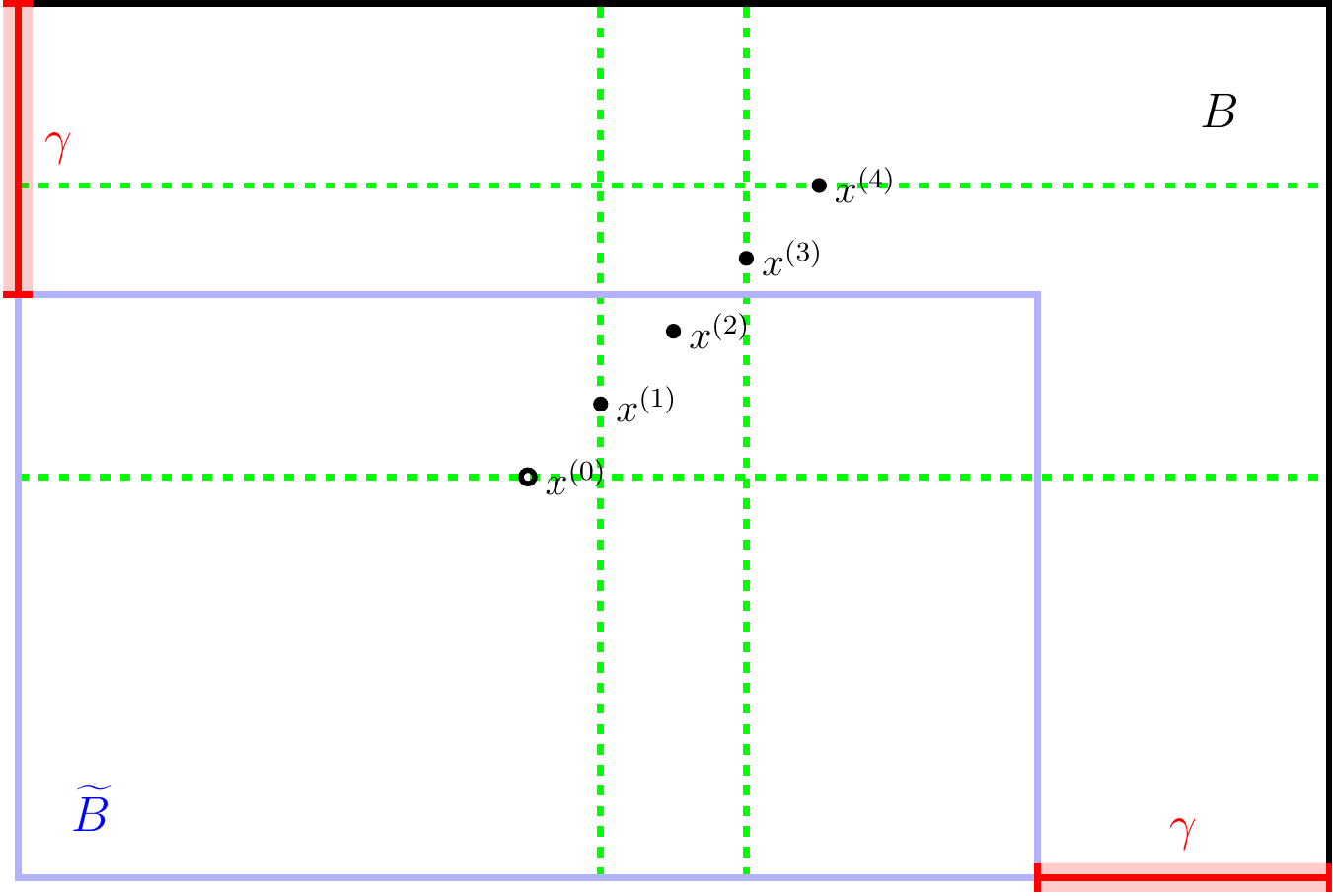}
  \label{box figure}
   \caption{The box $B$ for $(r,d)=(3,2)$.
  The dashed lines indicate the zeros of $f$ in $B$.
  Since $f$ only vanishes there,
  one of the points $x^{(j)}$ must be a non-zero.}
\end{figure}
This means that we have an $\varepsilon$-detector for $F_{r,M}^d$ with the cardinality
\begin{equation*}
 \abs{P}=((r-1)d+1) \abs{P_0},
\end{equation*}
where $P_0$ is a point set with dispersion $\varepsilon^{1/r}/2$ or less.
For example, we know from \cite{Ru16} that this can be achieved with
\begin{equation} \label{eq: pointset_Ru16}
 \abs{P_0}=\left\lceil 2^4 d\, \varepsilon^{-1/r} 
 \ln\brackets{66\varepsilon^{-1/r}} \right\rceil
\end{equation}
points.
In particular, Lemma~\ref{detector for small M} 
and Lemma~\ref{detector lemma} 
give the last statement of Theorem~\ref{thm: thm_upp_bnd}
with the constant
$c_4=85r+C_{r,M}$. 
\begin{rem}
Based on \cite{S17}, it has recently been shown 
in
\cite{UV17}
that $P_0$ can
also be chosen such that
\begin{equation*}
 \abs{P_0}=\left\lceil 2^{11} \log d\,  \log^2\brackets{2\varepsilon^{-1/r}}
 \varepsilon^{-2/r} \right\rceil.
\end{equation*}
This number is smaller than \eqref{eq: pointset_Ru16} if $d$ is large, 
but the dependence on $\varepsilon$ is worse.
\end{rem}
\begin{rem}
In contrast to the algorithms for large and moderate derivatives,
the algorithm for small derivatives is not completely explicit
since we do not know how to construct 
such point sets $P_0$.
We only know that they exist.
So far, the only construction of a point set
that achieves the desired dispersion and
only grows polynomially with the dimension
is given in~\cite{Kr17}.
However, the order of growth is proportional
to $\log(\varepsilon^{-1})$ and the
resulting algorithm would not improve on
the algorithm for moderate derivatives.
\end{rem}

\section{Lower bounds}
\label{lower bounds section}

In this section we provide lower bounds
on the complexity of the uniform approximation problem on $F_{r,M}^d$.
Together with the upper bounds from Section~\ref{algorithms section},
this proves the tractability results of Theorem~\ref{tractability results}.
First, we recall the relevant notions of tractability.
For every $n\in\IN$,
the $n$th minimal worst case error 
is given by
\[
 e(n,d):= \inf_{A_n} e(A_n),
\]
where the infimum is taken over all adaptive algorithms $A_n$ 
that use at most $n$ function values. 
The algorithms are of the form
$
 A_n(f) = \phi(f(x_1),\dots,f(x_n))
$
with $\phi \colon \mathbb{R}^n \to L_\infty$, where the $x_i\in [0,1]^d$
can be chosen adaptively, depending on the already known 
function values $f(x_1),\dots,f(x_{i-1})$, see for example \cite{No96,NW08}.
We also need the inverse of the minimal worst case error
\[
 n(\varepsilon,d):=\inf\{n\mid e(n,d)\leq \varepsilon\}.
\]
We say that the uniform recovery problem on $F_{r,M}^d$
\begin{itemize}
  \item is strongly polynomially tractable if there are $c,p>0$ such that 
 $n(\varepsilon,d)\leq c\, \varepsilon^{-p}$ 
 for all $\varepsilon\in(0,1)$ and all $d\in\IN$;
 \item is polynomially tractable if there are $c,q,p>0$ such that 
 $n(\varepsilon,d)\leq c \,\varepsilon^{-p} d^q$ 
 for all $\varepsilon\in(0,1)$ and all $d\in\IN$;
 \item is quasi-polynomially tractable if there are $c,t>0$ such that
 \[
  n(\varepsilon,d)\leq c \exp\left(t(1+\ln(\varepsilon^{-1}))(1+\ln d)\right)
 \]
for all $\varepsilon\in(0,1)$ and all $d\in\IN$;
 \item suffers from the curse of dimensionality 
  if there is some $\varepsilon>0$, $c>0$ and $\alpha>1$ 
  such that $n(\varepsilon,d)\geq c \alpha^d$.
\end{itemize}
Note that the results of Section~\ref{algorithms section}
imply that the problem is quasi-polynomially tractable if $M<2^r r!$
and polynomially tractable if $M\leq r!$.
We now provide the respective lower bounds.

\begin{proof}[The case $M\in [2^r r!,\infty)$.]
Here, the problem suffers from the curse of dimensionality.
This has already been shown in \cite[Theorem~2]{NoRu16}.
For the reader's convenience, we repeat the proof.
The function
\begin{equation*}
 g(x)=2^r (x-1/2)^r\cdot \mathbf{1}_{[0,1/2]}(x),
 \quad
 x \in [0,1]
\end{equation*}
is $r$-times differentiable with $\norm{g}_\infty=1$ and 
$\norm{g^{(r)}}_\infty \leq M$.
The same holds for the function
\begin{equation*}
 h(x)=2^r (x-1/2)^r\cdot \mathbf{1}_{[1/2,1]}(x),
 \quad
 x \in [0,1].
\end{equation*}
Hence, all functions
$f=f_{[d]}$ with $f_i \in \set{g,h}$
for $i\in[d]$
are contained in $F_{r,M}^d$ and satisfy $\norm{f}_\infty=1$.
These are $2^d$ functions with pairwise disjoint support.

Let $A$ be an algorithm and let $x_1,\dots,x_n$ 
be the sample points the algorithm uses for
the input $f_0=0$.
If $n<2^d$, there is at least one $f$ from the $2^d$ functions defined above 
that vanishes at all these points.
Therefore, the algorithm cannot distinguish $f$ and
$-f$ from $f_0$ such that
\begin{equation*}
 A(f)=A(f_0)=A(-f)
\end{equation*}
and we obtain the error bound
 \begin{equation*}
  \er(A) \geq \max\set{\norm{A(f_0)-f}_\infty,\norm{A(f_0)+f}_\infty}
  \geq \norm{f}_\infty =1.
 \end{equation*}
Hence, the problem suffers from the curse of dimensionality: 
For any $\varepsilon\in(0,1)$ we have that
$n(\varepsilon,d) \geq 2^d$.
\end{proof}

\begin{proof}[The case $M\in (r!,2^r r!)$.] 
Note that the point $x_0=\brackets{r!/M}^{1/r}$ is contained
in $(1/2,1)$. The function
\begin{equation*}
 g(x)=\frac{M(x-x_0)^r}{r!} \cdot \mathbf{1}_{[0,x_0]}(x),
 \quad
 x \in [0,1]
\end{equation*}
is $r$-times differentiable with $\norm{g}_\infty=1$
and $\norm{g^{(r)}}_\infty \leq M$.
The function
\begin{equation*}
 h(x)=\frac{M(x-x_0)^r}{r!}\cdot \mathbf{1}_{[x_0,1]}(x),
 \quad
 x \in [0,1]
\end{equation*}
is also $r$-times differentiable with $\Vert h^{(r)} \Vert_\infty \leq M$
and $\norm{h}_\infty=\abs{h(1)}$ is in $(0,1)$.
Let $k(\varepsilon,d)$ be the largest number in $[d]\cup \{0\}$  
such that $\abs{h(1)}^{k(\varepsilon,d)} > \varepsilon$.
Namely, let
\begin{equation*}
 k(\varepsilon,d)=\min\set{\kappa(\varepsilon), d}
 \quad\text{with}\quad
 \kappa(\varepsilon):=\left\lceil
 \frac{\ln(\varepsilon^{-1})}{\ln(\abs{h(1)}^{-1})}
 \right\rceil -1.
\end{equation*}
For every subset $J$ of $[d]$ with cardinality $k(\varepsilon,d)$,
the function $f=f_{[d]}$
with $f_i = g$ for $i\in J$ and
$f_i = h$ for $i\in [d]\setminus J$ is contained in $F_{r,M}^d$
and satisfies $\norm{f}_\infty>\varepsilon$.
These are $\binom{d}{k(\varepsilon,d)}$ functions 
with pairwise disjoint support.

Let $A$ be an algorithm and let $x_1,\dots,x_n$ 
be the sample points the algorithm uses for
the input $f_0=0$.
If $n<\binom{d}{k(\varepsilon,d)}$, there is at least one $f$ from the  
$\binom{d}{k(\varepsilon,d)}$ functions defined above
that vanishes at all these points.
Therefore, the algorithm cannot distinguish $f$ and
$-f$ from $f_0$, such that its error satisfies
 \begin{equation*}
  \er(A) \geq \max\set{\norm{A(f_0)-f}_\infty,\norm{A(f_0)+f}_\infty}
  \geq \norm{f}_\infty > \varepsilon.
 \end{equation*}
We obtain 
\begin{equation*}
 n(\varepsilon,d) \geq \binom{d}{k(\varepsilon,d)} 
 \geq \brackets{\frac{d}{k(\varepsilon,d)}}^{k(\varepsilon,d)}.
\end{equation*}
This implies that the problem is not polynomially tractable:
In fact, let us assume that the problem is polynomially tractable.
Then there are $c,q,p>0$ such that 
\begin{equation}
\label{trac assumption}
 n(\varepsilon,d)\leq c\, \varepsilon^{-p} d^q
\end{equation}
for all $\varepsilon\in(0,1)$ and all $d\in\IN$.
We can, however, choose $\varepsilon\in(0,1)$
such that $\kappa(\varepsilon) > q$ and hence
\begin{equation*}
 \lim\limits_{d\to \infty} \frac{n(\varepsilon,d)}{d^q}
 \geq \lim\limits_{d\to \infty}
 \frac{d^{\kappa(\varepsilon)-q}}{\kappa(\varepsilon)^{\kappa(\varepsilon)}}
 = \infty,
\end{equation*}
which contradicts the assumption \eqref{trac assumption}.
\end{proof}

\begin{proof}[The case $M\in (0,r!\rbrack$.]
First, we consider the case $r\geq 2$.
Let $A$ be an algorithm and let $x_1,\dots,x_n$ 
be the sample points the algorithm uses for
the input $f_0=0$.
Let us assume that $n\leq d$.
For each $i\in[n]$, there is a linear function $f_i$
on $[0,1]$ that vanishes at the $i$-th coordinate of $x_i$
and satisfies $\norm{f_i}_\infty = 1$.
For $i\in[d]\setminus [n]$ we set $f_i=1$.
The function $f=f_{[d]}$ is in $F_{r,M}^d$
and vanishes at all sample points.
Hence, $f$ and $-f$ cannot be distinguished from $f_0$
and the error of $A$ is at least $\norm{f}_\infty=1$.
This implies that $n(\varepsilon,d) > d$ for any $\varepsilon<1$.
In particular,  
the problem is not strongly polynomially tractable.

Now assume that $r=1$.
The previous argument does not work in this case,
since the first derivative of $f_i$ is not
necessarily bounded by $M$.
Here, we assume that 
the number of sample points of the algorithm $A$ for the input $f_0=0$
is at most $\lfloor \log_2 d \rfloor$.
By the proof of \cite[Lemma 2]{ahr}, we know that there
are two distinct coordinates $j,\ell \in [d]$ such that
the box $I_{[d]}$ does not contain any of these points,
where $I_j = [0,1/2)$, $I_\ell=(1/2,1]$
and $I_i=[0,1]$ otherwise.
The function $f=f_{[d]}$ with 
\begin{equation*}
 f_i(x)=M (x-1/2) \cdot \mathbf{1}_{I_i}(x),
 \quad x\in[0,1]
\end{equation*}
for $i\in\set{j,\ell}$ and $f_i=1$ otherwise,
is contained in $F_{r,M}^d$ and vanishes at all sample points.
Therefore, the algorithm cannot distinguish $f$ and
$-f$ from $f_0$ such that its error is at least $\norm{f}_\infty=M^2/4$.
This implies that $n(\varepsilon,d) > \lfloor \log_2 d \rfloor$ 
for any $\varepsilon<M^2/4$.
In particular, 
the problem is not strongly polynomially tractable.
\end{proof}

\begin{rem}
In \cite[Theorem~3]{NoRu16} the curse of dimensionality for 
$M\in[2^r r!,\infty)$ is also proven
for randomized algorithms. Similarly, 
one can also extend the lower bound for the case $M\in(r!,2^rr!)$ to randomized algorithms
by using a 
technique of Bakhvalov, see \cite[Section 2.2.2]{No88} for details.
\end{rem}

\noindent
{\bf Acknowledgements.} 
Both authors were supported by the Erwin Schr\"odinger International Institute for Mathematical Physics
during the program on \emph{Tractability of High Dimensional Problems and Discrepancy}.
Daniel Rudolf gratefully acknowledges support of the 
Felix-Bernstein-Institute for Mathematical Statistics in the Biosciences
(Volkswagen Foundation) and the Campus laboratory AIMS.

\raggedright{

}

\end{document}